\newtheorem{theo}{Theorem}
\newtheorem{coro}{Corollary}
\newtheorem{propo}{Proposition}
\begin{document}

\title{ Moufang Theorem for a  variety of local  non-Moufang  loops}

\author{Ramiro Carrillo-Catal\'an \\
\small CONACyT - Universidad Pedag\'ogica Nacional \\[-0.8ex]
\small Unidad 201 Oaxaca\\[-0.8ex]
\small \texttt{rcarrilloca@conacyt.mx}\\
Marina Rasskazova\\
\small Omsk State Institute of Service\\[-0.8ex]
\small Pevtsova street 13, Omsk\\[-0.8ex]
\small \texttt{marinarasskazova@yandex.ru}\\
Liudmila Sabinina\\
\small Centro de Investigacion en Ciencias   \\[-0.8ex]
\small UAEM, Cuernavaca\\[-0.8ex]
\small \texttt{liudmila@uaem.mx}
\date{May 26, 2017}
}

\maketitle

\begin{abstract}
An open problem in theory of loops is to find the variety of non- Moufang loops satisfying the Moufang Theorem. 
In this note, we present  a variety of local smooth diassociative loops with such  property.  \\
 
\textbf{Key words:}  \textit{ Binary-Lie algebras, Malcev algebras, diassociative loops, Moufang loops, Steiner loops. }.\\

\textbf{2010 Mathematics Subject Classification:}  17D10, 20N05. 
\end{abstract}

\section{Introdution}

At  the conference ``Loops '11"  in Trest, Czech Republic,  Andrew Rajah proposed the following question: \\
\textbf{(Moufang theorem in non-Moufang loops).} We say that a variety $V$ of loops satisfies the Moufang theorem
 if for every loop $Q$ in $V$ the following implication holds: 
 if $x(yz) = (xy)z$ for every $x, y, z \in Q$ then the subloop generated by $x, y, z$ is a group. 
Is every variety satisfying the Moufang theorem contained in the variety of Moufang loops?\\

Recently this question was discussed in  several articles, as for example \cite {ColbSt}, \cite{MGColb},  and \cite{St}.

In all cited  articles there were found some cases or clases of Steiner loops satisfying the  statement  of Moufang Theorem.
Non of those examples present the variety of loops.  Our example of a variety in the context of  local smooth loops 
 doesn't give the complete answer to  the question of A. Rajah, but it sheds some additional light on his question.

 R. Moufang,   in 1931, started studying algebraic structures which today are called Moufang loops.
Recall that a \textit{loop}  is a set $Q$ with a binary operation  $<Q, \cdot, 1>$,such that  the equations $x\cdot a= b,\quad a\cdot y=b$
have a unique solution  $ \forall  a,b \in Q$ and $x\cdot 1=x=1\cdot x \quad  \forall x\in Q.$ And in the same way, a \textit{Moufang loop} is a loop in which the   identity 
  $ ((xy)x)z=x(y(xz))$ hold for every three elements of the loop.   
A loop $Q$ is called  {\textit diassociative } if every two elements generate a subgroup of $Q$.
 In addition to  Moufang loops 
another example of diassociative loops are {\textit Steiner} loops. Steiner loops are diassociative commutative loops of exponent $2$. (See [ColbSt]  and [MGColb]).

\section {Main Theorem}
  In 1935  Ruth Moufang  showed   her famous theorem [RM]:
Let $M$ be a Moufang loop. If $(a, b, c) = ((ab)c) (a(bc))^{-1}=1$ for some elements $ a,b,c \in M$ then $a, b, c$ generate a subgroup of $M$. 
It is easy to see that  the Moufang Theorem implies the diassociativity of loops: to verify this fact, consider the identity $(a, b, 1)=1$ in the loop, 
which  obviously always has a place.\\

In 1955  A.I. Malcev applying the  Campbell-Hausdorff formula to the varieties of smooth  local loops
introduced   the Binary-Lie algebras  as  tangent algebras of  smooth diassociative loops and Moufang-Lie algebras 
( now called Malcev algebras )  as tangent algebras of smooth local Moufang loops  \cite{Ma}. 
The identities $ x^2 = 0,\, J(x,y, xy)=0,$ (where  
 $J(x,y,z)=(xy)z+(yz)x+(zx)y$) defining the variety of Binary-Lie algebras were found in \cite{Ga}. 
On the other hand, the identities $ x^2 = 0,\, J(x,y, xz)=J(x,y, z)x,$  which define the variety of Malcev algebras were 
stated in \cite {Sa}.   The  identities  $ x^2 = 0,\, J(x,y, zt)=J(x,y, z)t=0$ of the variety  of  Malcev algebras
which are tangent algebras of smooth local left-automorphic  Moufang loops  are discussed in \cite {CS1}  and 
the identities  $ x^2 = 0, J(x,y, xz)=J(x,y, z)x=0$  of the variety  of Malcev algebras which are tangent algebras of  
smooth local almost left automorphic Moufang loops were found in  \cite {CS2}\\

The analog of  Moufang Theorem in the context of Malcev algebras has the following form:
Let $\mathcal M$  be a Malcev algebra. If  for given three  elements  $\{x_1, x_2, x_3\}$ of $\mathcal M$  the equality
$J(x_1,x_2,x_3)=0$ is satisfied, the subalgebra  of $\mathcal M$ generated by these elements, $\{x_1, x_2, x_3\}$,  is a Lie algebra.\\

Thus, Rajah's question in this sense should be:  Is there a variety of Binary-Lie algebras satisfying the  analog of Moufang theorem  which does  not belong to  the variety of Malcev algebras ?  \\

In the following, all the algebras will be consider as algebras over a field $\it k$.

The aim of this note is to show the following:
\begin{theo}
Let ${\mathfrak w}$ be a  variety of  Binary-Lie  algebras defined by the  identities
$$  x^2 = 0,\quad J( x, y, zu) = 0,$$
 where  
 $J(x,y,z)=(xy)z+(yz)x+(zx)y$. \\
Then\\
1. ${\mathfrak w} $ is not a variety of Malcev algebras.\\
2.  Any algebra of the variety   ${\mathfrak w}$  satisfies the statement of  an analog of  Moufang Theorem. 
\end{theo}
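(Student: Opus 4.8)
The plan is to treat the two assertions separately, handling the Moufang-type statement (item 2) first since it drives the whole argument, and then producing an explicit algebra for item 1.

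For item 2 I would begin by recording two elementary structural facts about any algebra $A$ of $\mathfrak{w}$. First, $x^2=0$ forces anticommutativity, $xy=-yx$. Second, in an anticommutative algebra the Jacobian $J(x,y,z)=(xy)z+(yz)x+(zx)y$ is an \emph{alternating} trilinear map: it is visibly invariant under cyclic permutation of its arguments, and the computation $J(y,x,z)=-J(x,y,z)$ shows it changes sign under the transposition of the first two slots, so cyclicity together with one transposition yields full antisymmetry. Now the defining identity $J(x,y,zu)=0$, read together with linearity in the third argument, says precisely that $J(x,y,m)=0$ for every $m$ in the square $A^2=\mathrm{span}\{ab:a,b\in A\}$; combined with the alternating property this gives the key reduction: $J(p,q,r)=0$ as soon as at least one of $p,q,r$ lies in $A^2$.

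With this in hand the conclusion is almost immediate. Let $B=\langle x_1,x_2,x_3\rangle$ be the subalgebra generated by three elements with $J(x_1,x_2,x_3)=0$. Since $B$ is spanned by $x_1,x_2,x_3$ together with all products of its elements, and every such product lies in $B^2\subseteq A^2$, each element of $B$ has the form $\lambda_1 x_1+\lambda_2 x_2+\lambda_3 x_3+b'$ with $b'\in A^2$. Taking $p,q,r\in B$ with generator-coefficients $\lambda_i,\mu_j,\nu_k$ and expanding $J(p,q,r)$ by multilinearity, every term carrying one of the $A^2$-components vanishes by the reduction above, so that $J(p,q,r)=\sum_{i,j,k}\lambda_i\mu_j\nu_k\,J(x_i,x_j,x_k)$; because $J$ is alternating only the terms with $\{i,j,k\}=\{1,2,3\}$ survive, and each equals $\pm J(x_1,x_2,x_3)=0$. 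Hence $J$ vanishes identically on $B$, so $B$ satisfies the Jacobi identity and, being anticommutative, is a Lie algebra. Equivalently, $J$ factors through $\Lambda^3\!\big(B/(B\cap A^2)\big)$, a space spanned by the single wedge $\bar x_1\wedge\bar x_2\wedge\bar x_3$, on which $J$ returns $J(x_1,x_2,x_3)=0$.

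For item 1 I would first notice how the Malcev identity degenerates inside $\mathfrak{w}$: since $xz\in A^2$, the left-hand side $J(x,y,xz)$ of $J(x,y,xz)=J(x,y,z)x$ already vanishes in every algebra of $\mathfrak{w}$, so such an algebra is Malcev if and only if $J(x,y,z)\,x=0$ for all $x,y,z$. It therefore suffices to exhibit one algebra of $\mathfrak{w}$ with $J(x,y,z)\,x\neq0$. I would build a finite-dimensional nilpotent model on a basis $a,b,c,d,e,f$ whose only nonzero products (with their anticommutative partners) are $bc=d$, $da=e$, $ea=f$, all remaining products of basis vectors being zero; then $J(a,b,c)=(bc)a=e$ and $J(a,b,c)\cdot a=ea=f\neq0$, so this algebra is not Malcev. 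The main obstacle is confirming that this candidate genuinely lies in $\mathfrak{w}$, i.e. that $J(x,y,zu)=0$ holds identically rather than only on generators: by the alternating property this reduces to checking that $J$ annihilates every triple one of whose entries lies in $A^2=\mathrm{span}(d,e,f)$, a finite collection of monomial evaluations. The subtle point, which a naive attempt exposes, is that the construction must avoid a ``collision'' in which the product chain producing $J(a,b,c)\cdot a$ is itself forced to vanish by one of these relations; choosing the innermost bracket to involve neither copy of the right multiplier $a$ (here $bc$) and only then multiplying twice by $a$ avoids this, since the witnessing monomial $((bc)a)a$ does not occur in any relation $J(e_i,e_j,e_ke_l)=0$. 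Equivalently, one may verify directly in the degree-four part of the free $\mathfrak{w}$-algebra on $a,b,c$ that $J(a,b,c)\cdot a$ is not a consequence of the defining relations, the obstruction being carried entirely by the component $((bc)a)a$. Completing this finite verification finishes item 1.
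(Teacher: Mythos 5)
Your proposal is correct, and on item 2 it is in substance the paper's own proof: the paper decomposes each element $w_i$ of the generated subalgebra as $v_i+u_i$ with $v_i\in\mathrm{span}\{a,b,c\}$ and $u_i\in\mathcal{C}^2$, uses the identity $J(x,y,zu)=0$ (together with the alternating character of $J$, which the paper leaves tacit and you rightly make explicit) to kill every term carrying a square component, and concludes $J(w_1,w_2,w_3)=\alpha J(a,b,c)=0$; your remark that $J$ factors through $\Lambda^3\bigl(B/(B\cap A^2)\bigr)$ is just a tidier packaging of that step. On item 1 you follow the same strategy as the paper --- exhibit an explicit member of $\mathfrak{w}$ violating the Malcev identity --- but with a different witness: the paper takes a $4$-dimensional non-nilpotent solvable algebra with $ab=ac=0$, $bc=d$, $da=d$, $bd=cd=0$, for which $J(a,b,ac)=0$ while $J(a,b,c)a=da=d\neq 0$, whereas you build a $6$-dimensional nilpotent algebra ($bc=d$, $da=e$, $ea=f$) with $J(a,b,c)a=ea=f\neq 0$. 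Your preliminary reduction --- inside $\mathfrak{w}$ the left-hand side $J(x,y,xz)$ of the Malcev identity is identically zero, so an algebra of $\mathfrak{w}$ is Malcev if and only if $J(x,y,z)x=0$ holds --- is left implicit in the paper and is a genuinely useful clarification. Your finite verification also checks out: with $A^2=\mathrm{span}(d,e,f)$ one gets $J(x,y,f)=0$ and $J(x,y,e)=0$ outright, and $J(x,y,d)=(yd)x+(dx)y=-\alpha\beta f+\alpha\beta f=0$ where $\alpha,\beta$ are the $a$-coefficients of $x,y$, so your algebra does lie in $\mathfrak{w}$. As for what each choice buys: the paper's example is smaller and simultaneously illustrates the structural fact $J(\mathcal{L})=\mathcal{L}^2=Lie(\mathcal{L})$, while yours shows that the Malcev identity already fails among \emph{nilpotent} algebras of $\mathfrak{w}$, which is slightly sharper information than the paper's solvable non-nilpotent example provides.
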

\begin{proof}
1.
Consider  a non-nilpotent  solvable  $4$-dimensional algebra $\mathcal L$  from the variety   ${\mathfrak w}$ generated by the elements
$\{a, b, c\}$ with the following relations:\\
$$ab=ac=0,\quad bc=d,\quad da=d,\quad bd=cd=0$$
we have $ J(a, b, c)=d$, and therefore  by direct computation  we get:
$$J(\mathcal L) = {\mathcal L}^2 = kd = Lie(\mathcal L),$$
where $k\in \it k,$
$J(\mathcal L)$ is an ideal generated by all {\it jacobians} $J(x,y,z)  \forall x,y,z \in \mathcal{L}$ and 
$Lie(\mathcal L)= \{ x\in \mathcal L\mid J(x, y, z) = 0, \forall y, z \in \mathcal L\}$ is a {\it Lie center} of an algebra $\mathcal L.$
\\
We have $J(a, b, ac ) - J(a, b, c)a =d \neq 0, $ hence 
 $L$ is not a Malcev algebra.\\[4ex]

2. In order to prove the second statement of the Theorem
let us consider the  algebra $\mathcal C$ from the variety ${\mathfrak w}$ , generated by the elements $\{a,b,c\},$ such that 
 the following condition  holds:
$$J(a, b, c)=0.$$
Let us note that in this case $$J(w_1, w_2, w_3)=0,$$
for all $w_i \in  \mathcal C$, 
Indeed, we have
$ w_i =v_i  + u_i$ where $v_i$ is an element from the vector space $V $ with the base $\{a, b, c\} $
and $u_i \in {\mathcal C}^2$. Therefore  by definition of ${\mathfrak w}$  we have 
$$J(w_1, w_2, w_3) = J(v_1, v_2, v_3) = \alpha J(a, b, c)=0,\quad \alpha \in \it k.$$
\end{proof}

Using the Malcev Theorem  on the correspondence between local diassociative loops  and their tangent Binary-Lie algebras \cite {Ma} we get the following

\begin{coro}
There exists the  non-Moufang variety of local dissociative loops ${\mathfrak W}$ such that the statement of Moufang Theorem holds 
for every  loop in ${\mathfrak W}.$
\end{coro}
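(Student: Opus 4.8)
The plan is to transport Theorem 1 from the category of tangent algebras to the category of local loops by means of Malcev's functor \cite{Ma}. First I would \emph{define} the variety $\mathfrak{W}$ to consist of all local smooth diassociative loops whose tangent Binary-Lie algebra lies in $\mathfrak{w}$. By the Malcev correspondence between germs of smooth local diassociative loops and Binary-Lie algebras, this prescription is well posed: each algebra of $\mathfrak{w}$ integrates, via the Campbell-Hausdorff construction, to the germ of a diassociative loop, and conversely the tangent algebra of any such loop is recovered as an object of $\mathfrak{w}$. Thus $\mathfrak{W}$ is a genuine (non-empty) variety of local diassociative loops in the sense used throughout this note.

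Next I would show that $\mathfrak{W}$ is not contained in the variety of Moufang loops. Under the Malcev correspondence the smooth local Moufang loops are exactly those whose tangent algebra is a Malcev algebra. By part~1 of Theorem~1 the four-dimensional algebra $\mathcal L\in\mathfrak{w}$ is \emph{not} a Malcev algebra, so the local loop integrating $\mathcal L$ belongs to $\mathfrak{W}$ but fails to be Moufang. Hence $\mathfrak{W}$ is a non-Moufang variety, which establishes the first half of the Corollary.

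Finally I would verify that the statement of the Moufang Theorem holds throughout $\mathfrak{W}$. Fix a loop $Q\in\mathfrak{W}$ with tangent algebra $\mathcal C\in\mathfrak{w}$, and suppose $x(yz)=(xy)z$ for some $x,y,z\in Q$. The decisive translation step is that, in the Campbell-Hausdorff picture, the vanishing of the associator $x(yz)\cdot((xy)z)^{-1}$ in $Q$ corresponds to the vanishing of the Jacobian $J(\xi,\eta,\zeta)=0$ of the associated tangent elements $\xi,\eta,\zeta\in\mathcal C$. Once $J(\xi,\eta,\zeta)=0$, part~2 of Theorem~1 guarantees that the subalgebra $\langle\xi,\eta,\zeta\rangle$ is a Lie algebra; and under the correspondence a Lie subalgebra integrates precisely to an associative subloop. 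Therefore $\langle x,y,z\rangle$ is a group, which is exactly the conclusion required.

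The main obstacle is this last translation: relating the loop associator to the tangent Jacobian and, symmetrically, matching the property that $x,y,z$ generate a group in $Q$ with the property that $\xi,\eta,\zeta$ generate a Lie subalgebra in $\mathcal C$. The associativity defect admits a Campbell-Hausdorff expansion whose leading nontrivial term is controlled by $J(\xi,\eta,\zeta)$, but one must confirm that the passage between associativity in $Q$ and the vanishing Jacobian in $\mathcal C$ is a genuine equivalence rather than a one-sided implication. Pinning down this functorial dictionary is the heart of the argument, for it is what converts the purely algebraic statement of Theorem~1 about Jacobians and the Lie center into the loop-theoretic Moufang Theorem.
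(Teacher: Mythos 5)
Your proposal is correct and follows essentially the same route as the paper: the paper's entire justification for the Corollary is the one-line invocation of Malcev's correspondence between smooth local diassociative loops and their tangent Binary-Lie algebras \cite{Ma}, combined with the two parts of Theorem~1, which is precisely what you carry out. The ``translation step'' you rightly flag (matching the vanishing of the loop associator with the vanishing of the tangent Jacobian, and a group subloop with a Lie subalgebra) is left entirely implicit in the paper, so your version is in fact more explicit than the original.
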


Now we give  the general construction for all  algebras from  the variety
${\mathfrak w}$.

Let $L$ be a Lie algebra, $Der \,\, L$ the Lie algebra of derivations of $L$ 
and  $P$ an arbitrary vector space.
Let $P_0 \subseteq Der L$ be a subspace of $Der L$ such that $[P_0,P_0] \subseteq Inn L,$
where $ Inn L$ is the ideal of all {\it inner derivations} of $L$, i.e. $Inn L =\{ad x \mid x\in L\} \quad (ad x: a \rightarrow [ax]).$ \\
Let $\psi :P\rightarrow P_0$ be some epimorphism.\\
Consider  $L=L_0 \oplus Z(L),$ where $Z(L)= \{ x\in L\mid [x,L]=0\}.$ It is possible to identify:
$$ L_0 \cong L/ Z(L)\cong Inn L.$$
Suppose that $[p_1,p_2]\in L_0, \,\,p_i\in P.$\\
In addition, consider an arbitrary linear function $\lambda :P\wedge P \rightarrow Z(L).$ \\

On the set $ B= P\oplus L$ we define the operation:
\begin{align}
 (p_1,l_1)\cdot (p_2,l_2) = (0, [p^{\psi}_1,p^{\psi}_2] + \lambda (p_1,p_2) + l^{p^{\psi}_2}_1 - l^{p^{\psi}_1}_2 +[l_1,l_2]).
\end{align}

\begin{propo} The algebra $B$ with the operation (1) belongs to the variety  ${\mathfrak w}.$ Any algebra from the 
 variety ${\mathfrak w}$ can be obtained by the  construction described above.

\end{propo}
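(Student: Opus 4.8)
The plan is to prove both halves by isolating the one structural consequence of the identity $J(x,y,zu)=0$. In any anticommutative algebra $J(x,y,z)=(xy)z+(yz)x+(zx)y$ is an alternating trilinear form (it is invariant under cyclic permutations and changes sign under a transposition), so the defining identity says exactly that $J$ vanishes whenever one of its arguments lies in $A^2$; equivalently $A^2\subseteq Lie(A)$. Since $Lie(A)$ satisfies the Jacobi identity and $A^2$ is an ideal (trivially $A\cdot A^2\subseteq A^2$), this already shows that $A^2$ carries the structure of a Lie algebra. Both directions of the Proposition revolve around this fact.

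For the first statement I would first note that every product coming from (1) has zero $P$-component, so $\{0\}\oplus L$ is an ideal of $B$ on which the multiplication restricts to the Lie bracket of $L$; in particular $zu\in\{0\}\oplus L$. Anticommutativity, hence $x^2=0$, is immediate since $[\cdot,\cdot]$ and $\lambda$ are alternating and the action terms $l_1^{p_2^\psi}-l_2^{p_1^\psi}$ switch sign under transposition. The heart of the matter is that for $x=(p,l)$ right multiplication by $x$ acts on $\{0\}\oplus L$ as the derivation $D_x=p^\psi-\mathrm{ad}_l$ of $L$. Writing $w=zu=(0,m)$ and $n$ for the $L$-component of $xy$, a short computation gives $J(x,y,w)=(0,\ \mathrm{ad}_n(m)-[D_x,D_y](m))$, so everything reduces to the operator identity $\mathrm{ad}_n=[D_x,D_y]$. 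Expanding $n$ via (1) and using the standard relations $\mathrm{ad}_{[l,l']}=[\mathrm{ad}_l,\mathrm{ad}_{l'}]$ and $\mathrm{ad}_{d(l)}=[d,\mathrm{ad}_l]$ for a derivation $d$, the four summands of $\mathrm{ad}_n$ coincide term by term with the expansion of $[\,p_1^\psi-\mathrm{ad}_{l_1},\ p_2^\psi-\mathrm{ad}_{l_2}]$, the central term $\lambda$ contributing nothing because $\mathrm{ad}$ kills $Z(L)$. Hence $J(x,y,zu)=0$ and $B\in{\mathfrak w}$.

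For the converse I would take $L:=A^2$ with its Lie structure, choose a vector-space complement $P$ with $A=P\oplus L$, and let the data be dictated by the identity. Applying $J(w,w',x)=0$ for $w,w'\in A^2$ shows that right multiplication $R_x\colon L\to L,\ R_x(v)=vx$, is a derivation of $L$; applying $J(x,y,w)=0$ for $w\in A^2$ shows $\mathrm{ad}_{xy}=[R_x,R_y]$, whence $[R_x,R_y]\in\mathrm{Inn}\,L$. Setting $\psi:=R|_P$ and $P_0:=R(P)\subseteq\mathrm{Der}\,L$ makes $\psi\colon P\to P_0$ an epimorphism with $[P_0,P_0]\subseteq\mathrm{Inn}\,L$, and one checks $R_l=-\mathrm{ad}_l$ for $l\in L$, matching $D_x$ above. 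Decomposing $L=L_0\oplus Z(L)$ and using that $\mathrm{ad}$ restricts to an isomorphism $L_0\to\mathrm{Inn}\,L$, the product $p_1p_2\in A^2$ splits as its $L_0$-part, the unique element realizing $[\psi(p_1),\psi(p_2)]=[p_1^\psi,p_2^\psi]$ in $L_0$ (this is the content of the hypothesis $[p_1,p_2]\in L_0$), plus its central part, which I define to be $\lambda(p_1,p_2)\in Z(L)$. Expanding $(p_1+l_1)(p_2+l_2)$ and collecting the four summands $p_1p_2,\ p_1l_2,\ l_1p_2,\ l_1l_2$ then reproduces (1) exactly.

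The genuinely routine parts are the two one-line verifications that $R_x$ is a derivation and that $\mathrm{ad}_{xy}=[R_x,R_y]$, together with the $\mathrm{ad}$-bookkeeping in the forward direction. I expect the main obstacle to be the converse: recognizing that the single identity $J(x,y,zu)=0$ is precisely equivalent to the pair of conditions (right multiplications are derivations of $A^2$, and their commutators are the inner derivations $\mathrm{ad}_{xy}$), and then verifying that $\lambda$, produced from the splitting of $p_1p_2$ relative to the chosen complements $P$ and $L_0$, is indeed a well-defined alternating bilinear map into $Z(L)$.
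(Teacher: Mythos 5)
Your proof is correct, and its skeleton is the paper's: both rest on the observation that the identity $J(x,y,zu)=0$ says exactly that all products lie in the Lie center (the paper opens its proof with ``$B\in{\mathfrak w}$ iff $B^2$ lies in the Lie center''), both verify the direct statement by showing $J$ dies once one argument is in $L$, and both prove the converse by splitting off a complement $P$, reading $\psi$ from the action of $P$, and defining $\lambda$ as the central component of $p_1p_2$. There are two genuine differences. First, in the converse the paper decomposes $B=\mathrm{Lie}(B)\oplus P$ along the full Lie center, whereas you decompose $A=A^2\oplus P$; both are legitimate, since all that the reconstruction needs is a subalgebra $L$ with $A^2\subseteq L\subseteq \mathrm{Lie}(A)$. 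The paper's larger choice makes ``$p^\psi$ is a derivation of $L$'' and ``$J(p_1,p_2,a)=0$'' immediate from the definition of $\mathrm{Lie}(B)$, while your minimal choice must invoke the identity $A^2\subseteq \mathrm{Lie}(A)$ at exactly those two points (which you do, via the derivation property of $R_x$ and $\mathrm{ad}_{xy}=[R_x,R_y]$); in exchange, your $L$ is canonical and products visibly land in it. Note that $\mathrm{Lie}(B)$ can be strictly larger than $B^2$, so the two reconstructions generally yield different data $(P,\psi,\lambda)$ for the same algebra. Second, in the direct statement the paper checks the generating jacobians $J(p_1,a,b)$ and $J(p_1,p_2,a)$ tersely (the all-$L$ case being the Jacobi identity), while you package the same content into the single operator identity $\mathrm{ad}_n=[D_x,D_y]$ with $D_x=p^\psi-\mathrm{ad}_l$; this is the same verification organized as a semidirect-product-with-central-term check, and it is in fact more complete than the paper's case list, since it handles mixed arguments uniformly. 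Only cosmetic care is needed with sign conventions (the paper's $\mathrm{ad}\,x\colon a\mapsto[ax]$ is a right multiplication), which you keep internally consistent.
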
 
\begin{proof}
1. An algebra $B$ belongs to the variety ${\mathfrak w}$ if and only if 
 $B^2 $ belongs to the Lie center $Z(B).$\\
 Since  $B\cdot B \subseteq L$.  it is enough to show that $ L\subseteq Lie(B).$
By  construction \\
$J(p_1, a, b)=0$, since $p_1$ acts on $L$ as a derivation. In the same way, $J(p_1, p_2, a) = 0,$
 since $\lambda (p_1,p_2)\in Z(L)$ for all $p_1, p_2 \in P,\,\,\,a, b\in L.$\\
[4ex]

2. Now we show that every algebra from the variety ${\mathfrak w}$ may be obtained using the general construction.\\[2ex]

Consider  $B\in {\mathfrak w}$  and denote $L = Lie(B)$ , then $B= L\oplus P$, where $P$ is some vector space. By definition of ${\mathfrak w}\quad B^2 \subseteq L$, in particular $ [P, P] \subseteq L.$ For any $p\in P$ denote 
$$ p^{\psi}\in Der L:  p^{\psi}(a) = [p,a].$$

Notice that $\psi$ is defined correctly, since  $L$ is a Lie center. Moreover  $P_0 = P^{\psi}\subseteq Der L$ is satisfies the condition 
$$[P_0,P_0] \subseteq Inn L \cong L/Z(L)$$
In the case  $L= Z(L)\oplus L_0,$ where $L_0$ is a suitable vector space, we have a map 
$$ \phi : P_0\wedge P_0 \rightarrow L_0, \quad \phi (p^{\psi}_1 \wedge  p^{\psi}_2) = l_0,$$
where $[p_1,p_2] = l_0 +z. \quad l_0\in L_0, z\in Z(L)$,\\
Finally, define  $\lambda :P\wedge P \rightarrow Z(L)\quad \lambda (p_1 \wedge p_2) = z.$  
Under our notations
$[Z(L),p] \subseteq Z(L), \, p\in P$ implies the correctness of the definition of the map $\phi$.
This way one can construct the algebra $B$ using the Lie algebra $L$ and two maps defined above:
$\psi: P\rightarrow P_0$ and $ \lambda : P\wedge P \rightarrow Z(L).$ 
\end{proof}
{\bf Example.}\\
Let $L=\kappa c, \kappa \in \it k$  be a one-dimensional Lie algebra, let $P$ be a vector space, generated  by  elements $\{t, a, b\}.$\
Consider $P_0 = Der L.$
$$P_0=\kappa t_0,\, \kappa \in \it k,\quad ct_0 = c, \quad  \psi(t)=t_0,\quad \psi (a) = \psi( b) = 0,$$
$$\lambda (t, a) = \alpha_1c,\quad \lambda (t, b) = \alpha_2c,\quad \lambda (a,b) = \alpha_3c$$
Let us denote  $ B(\alpha_1, \alpha_2, \alpha_3)$ the  corresponding Binary-Lie algebra. 
It is easy to see that if $\alpha_3 \neq 0$, then every  such an algebra is isomorphic to $B(0, 0, 1)$
with base $\{t, a, b, c\}$ and operation:
$$ at=bt=0 \quad ab=c,\quad ac=bc=0,\quad ct=c.$$\\

Consider the variety ${\mathfrak v}$  of algebras defined by the identities $$x^2 =0, \quad J(x,y,xz)=0.$$ The variety 
${\mathfrak v}$  is the variety of Binary-Lie algebras, which are not  Malcev algebras, because  
${\mathfrak w}$  is contained in ${\mathfrak v}$. 
Now,  lets consider a free algebra of the variety ${\mathfrak v}$  generated by the elements $\{a,b,ac\}.$
We know that  $J(a,b, (ac)) = 0$.  Then, one can conjecture that $J(a,b, (ab)(ac)) \neq 0$.
If this conjecture is true then it will shown that the variety ${\mathfrak v}$  does not obey the statement of Moufang Theorem.\\

 Some natural questions arise:\\

1. Is it possible to find a maximal subvariety of the variety of Binary-Lie algebras for which the analog of Moufang Theorem holds?\\

2.  Analysing our example of  the  variety  ${\mathfrak W }$ of local loops  one can  consider   the variety   $  \Omega$   of  
(discrete) diassociative loops  with the additional identity  
$$(x,y,[z,t])=1.$$
It is known that the variety of Steiner loops   is  a subvariety of $\Omega.$ It is also known  that not all Steiner loops obey the statement of Moufang Theorem. 
In the case of  the variety of Steiner loops as we mentioned above the identity  $x^2 =1$   holds.

In this context, we conjecture that we can find a positive answer of A.Rajah's question for the variety of dissociative loops with the additional identities
$$(x,y,[z,t])=1, \quad x^m =1$$
for some odd $ m.$

\section*{Acknowledgments}
The authors thank  Alexander Grishkov and Ivan Shestakov for useful discussions.  

\section*{Funding}

The first author thanks  CONACYT and Universidad Pedag\'ogica Nacional Unidad 201 Oaxaca for supporting the C\'atedras CONACYT project  1522.   
The second author thanks  
to CNPq (Brazil), processo 307824/2016-0.
The third author thanks SNI and FAPESP grant process 2015/07245-4 for support.

\vspace{1cm}

\vspace{1.5cm}

\end{document}